\providecommand\@enum@widestlabel{7}
\newtheorem{lemma}{Lemma}[section]
\newtheorem{theorem}[lemma]{Theorem}
\newtheorem{proposition}[lemma]{Proposition}
\theoremstyle{definition}
\newtheorem{remark}[lemma]{Remark}
\newtheorem{problem}[lemma]{Problem}
\renewcommand{\theequation}%
{\arabic{section}.\arabic{equation}}
\newcommand{\CC}{\ensuremath{\mathbb{C}}} 
\newcommand{\PP}{\ensuremath{\mathbb{P}}} 
\newcommand{\QQ}{\ensuremath{\mathbb{Q}}} 
\newcommand{\RR}{\ensuremath{\mathbb{R}}} 
\newcommand{\sI}{\ensuremath{\kern -1pt \mathscr{I}\kern -2pt}} 
\newcommand{\sJ}{\ensuremath{\kern -2pt \mathscr{J}\kern -2pt}} 
\newcommand{\sO}{\ensuremath{\mathscr{O}}}
\newcommand{\bb}{\ensuremath{\mathfrak{b}}}
\renewcommand{\geq}{\geqslant}
\renewcommand{\leq}{\leqslant}
\DeclareMathOperator{\mult}{mult}
\DeclareMathOperator{\vol}{vol}
\newcommand{\deq}{\ensuremath{\stackrel{\textrm{def}}{=}}}
\newcommand{\Bplus}{\ensuremath{\textbf{\textup{B}}_{+} }}
\newcommand{\Bminus}{\ensuremath{\textbf{\textup{B}}_{-} }}
\newcommand{\nob}[2]{\ensuremath{\Delta_{#1}(#2)}}
\newcommand{\inob}[2]{\ensuremath{{\Delta}_{#1}(#2)}}
\definecolor{shadecolor}{gray}{0.875}
\let\cal\mathcal
\let\bb\mathbb
\begin{document}

\title{Infinitesimal Newton--Okounkov bodies on products of curves}

\author{Mihai Fulger}
\address{University of Connecticut, Department of Mathematics, Storrs CT 06269, USA}
\address{Institute of Mathematics of the Romanian Academy, Bucharest 010702, Romania}
\email{mihai.fulger@uconn.edu}

\author{Victor Lozovanu}
\address{Dipartimento di Matematica, Universit\`a Degli Studi Di Genova, Genova 16146, Italy}
\email{victor.lozovanu@unige.it}

\maketitle

\begin{abstract}
	We compute the generic infinitesimal Newton--Okounkov body of a natural polarization on products of curves. This appears to be the first nontrivial full description of such a body in arbitrary dimension.
	
	%we present the first nontrivial computation of such a body in arbitrary dimension. This convex shape is related to a slice of the global Newton--Okounkov body of a toric variety with respect to a general linear flag. 
\end{abstract}

\section{Introduction}
Let $X$ be a complex projective manifold of dimension $n$ and $L$ be a big line bundle on $X$.  For any complete flag $Y_{\bullet}$ of smooth subvarieties of $X$ one associates a valuative object, an $n$-dimensional compact convex set
\[
\nob{Y_{\bullet}}{L} \ \subseteq \ \bb R^n_{+},
\]
called the \emph{Newton--Okounkov body (NObody)}. These bodies determine the numerical class of $L$ (\cite{Jow10}), and offer convex geometric interpretation to various positivity invariants such as the volume and restricted volumes of $L$ (\cite{LM09}), base loci (\cites{KL17nobodies}, \cite{KL18}) the Nakayama $\sigma$-Zariski decomposition of $L$ (\cite{Roe16}), etc. The standard references for NObodies are \cites{KK12,LM09}. 

Local positivity aspects of $L$ can be studied on the blow-up $\pi:{\rm Bl}_x\to X$, working with the line bundle $\pi^*L$, and with linear flags in the exceptional divisor $E\simeq\bb P^{n-1}$. This leads to the \emph{infinitesimal NObody (iNObody)} $\inob{x}{L}$. 
Indeed \cite{KL17} prove that $\inob{x}{L}$ determines how $x$ sits relative to base loci of $L$, as well as the Seshadri constant (also in \cite{PS21}) and Fujita--Nakayama invariant (or width) of $L$ at $x$.

When the linear flag above is sufficiently general, we obtain \emph{generic iNObodies}. These were introduced in \cite{LM09} and further studied in \cite{KL19}. In \cite{FL25gen} we show how to read even more refined invariants that may not be clearly visible for special linear flags.

NObodies are known for $T$-invariant flags on toric varieties by \cite{LM09}, and relatively well-understood on surfaces by \cite{LM09} and \cite{KLM12}. 
The computational picture is murkier for iNObodies, at least due to their connection to the Seshadri constant, a quantitative difficult invariant on its own. One expects similar difficulty when generality conditions are imposed on the flag such as when working with generic iNObodies, or on toric varieties for NObodies constructed from flags that meet the dense torus.

In this paper we construct perhaps the first examples of generic iNObodies in any dimension.

\begin{theorem}\label{thm:main}
	Let $X=C_1\times\ldots\times C_n$ be a product of $n\geq 1$ smooth complex projective curves, polarized by a box product $L=L_1\boxtimes\ldots\boxtimes L_n$ where $L_i$ have degree 1 on $C_i$. Then for any $x\in X$,
	\[
	\inob{x}{L} \ = \ \textup{simplex}\bigl\{(0,\ldots, 0), (1,1,0,\ldots,0), (2,0,2,0,\ldots,0), \ldots, (n-1,0,\ldots,0,n-1), (n,0,\ldots,0)\bigr\}\ .
\] 
\end{theorem}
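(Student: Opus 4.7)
The plan is to prove the theorem by matching volumes: both $\Delta_x(L)$ and the target simplex have volume exactly $1$, so it suffices to show that $\Delta_x(L)$ contains all $n+1$ vertices of the simplex. For the volume of $\Delta_x(L)$, the multinomial expansion of $L^n = (\sum_i \pi_i^* L_i)^n$ retains only the squarefree term because $(\pi_i^* L_i)^2 = 0$, giving $L^n = n! \prod_i \deg L_i = n!$ and hence $\vol(\Delta_x(L)) = L^n/n! = 1$ by \cite{LM09}. For the simplex, expanding the vertex-matrix determinant along the column corresponding to $v_n = (n,0,\ldots,0)$ produces a lower-triangular minor with diagonal entries $1, 2, \ldots, n-1$, giving volume $\tfrac{1}{n!} \cdot n \cdot (n-1)! = 1$.

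The vertices $v_0 = 0$ and $v_n$ are realized immediately, at least asymptotically. For $v_0$ this is trivial; for $v_n$ one takes a pure tensor $s_m = \bigotimes_i s_i^{(m)}$ with each $s_i^{(m)} \in H^0(C_i, mL_i)$ of maximal vanishing order at $x_i$, so that the initial form $\prod_i T_i^{m+O(1)}$ on $E \simeq \mathbb{P}^{n-1}$ does not vanish on any generic linear subspace, forcing $\nu_k = 0$ for $k \geq 2$. The vertex $v_1 = (1,1,0,\ldots,0)$ is realized by sections whose initial form of degree $m$ is the $m$-th power of the linear form defining $Y_2$; since the ``good monomial'' subspace $W_{m,m} := \textup{span}\{T^\alpha : |\alpha|=m,\ \alpha_i \leq m\}$ coincides with the full $H^0(E, \mathcal{O}(m))$ in this degree, every such pure power is realized by the initial-form map.

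The main technical step, and the expected obstacle, is the construction for the intermediate vertices $v_j$ with $2 \leq j \leq n-1$. Here I need $s_m \in H^0(mL)$ whose initial form $F \in W_{jm,m}$ satisfies $F|_{Y_j} = c \cdot \ell^{jm}$ for some nonzero constant $c$ and the linear form $\ell$ defining $Y_{j+1}$ inside $Y_j \cong \mathbb{P}^{n-j}$. My approach is to start from a global lift $\tilde\ell^{jm} \in H^0(E, \mathcal{O}(jm))$ and correct it by elements of the ideal of $Y_j$ in degree $jm$ so as to cancel every monomial $T^\alpha$ with some $\alpha_i > m$; this reduces to a linear-algebra feasibility problem that I would tackle by a dimension count on the restriction $W_{jm,m} \to H^0(Y_j, \mathcal{O}(jm))$, or by exploiting the decomposition $W_{jm,m} \supseteq \bigcup_{|S|=j-1} \prod_{i \in S} T_i^m \cdot H^0(\mathbb{P}^{n-j}, \mathcal{O}(m))$ and combining the contributions from varying $S$. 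Once all vertices lie in $\Delta_x(L)$, convexity forces the simplex to be contained in $\Delta_x(L)$, and the volume equality forces equality.
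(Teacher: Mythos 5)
Your overall strategy — match volumes and show every vertex of the target simplex lies in $\inob{x}{L}$ — is exactly the paper's, and the volume computation and the two easy vertices $(0,\ldots,0)$, $(n,0,\ldots,0)$ are fine. The gap is in the intermediate vertices, which you correctly identify as the obstacle but do not actually resolve.

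Two concrete issues. First, you work directly with initial forms of sections of $mL$ on $X$ and with a span of monomials $W_{jm,m}$, but you never establish that those monomials are actually achievable as tangent cones. Lemma~\ref{lem:nonakamaye} gives an \emph{upper} bound (a multiplicity constraint at the strict transforms $\overline C_i$, equivalently the condition $\alpha_i\leq m$ on the monomial exponents), not a lower bound: the constrained monomials might still fail to be hit. Indeed, for $g(C_i)>0$ the vanishing order of sections of $mL_i$ at $c_i$ cannot reach $m$, so even the extreme monomial $T_i^m$ is generally not an initial form at finite $m$, and your "every such pure power is realized by the initial-form map" step for $v_1$ already slips past this. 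The paper avoids the difficulty entirely via Proposition~\ref{prop:equalslice}: a volume-continuity argument shows the vertical slices of $\inob{x}{L}$ \emph{equal} (not merely sit inside) the NObodies of the complete linear series $|d\rho^*H-\sum_i(d-1)E_i|$ on the toric blow-up $\widetilde E$ of $\bb P^{n-1}$ at the $n$ points $p_i$. After that reduction, one only needs a single effective divisor on $\widetilde E$ with the right valuation — no asymptotics on $X$ and no genus issues.

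Second, even granting the reduction, your dimension count on $W_{jm,m}\to H^0(Y_j,\cal O(jm))$ does not establish surjectivity. For $m>1$ the dimensions don't match: already for $j=2$, $n=3$, $m=2$ one has $\dim W_{4,2}=6$ but $\dim H^0(\bb P^1,\cal O(4))=5$, so you cannot conclude from counting. The paper sets $m=1$ and restricts to the squarefree monomial system $\Lambda_d=\bb C\langle x_I:|I|=d\rangle$, which has dimension $\binom{n}{d}=\dim H^0(\bb P^{n-d},\cal O(d))$ exactly; it then proves injectivity of the restriction map by an induction showing the products $\prod_{i\in I}f_i$ of general linear forms on $\bb P^{n-d}$ are linearly independent (Lemma~\ref{lemma:computing}). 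This is the piece your sketch is missing: the "linear-algebra feasibility problem" you describe genuinely requires such an argument, and the decomposition $\bigcup_{|S|=j-1}\prod_{i\in S}T_i^m\cdot(\cdots)$ you propose does not obviously supply it.
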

The starting point of the proof goes back to an idea from \cite{LM09}. In short, it describes the vertical slice $\Delta_x(L)\cap \{t\}\times \RR^{n-1}$ for $t\in \QQ_{>0}$ as the NObody of a graded subalgebra $R(W^t_{\bullet})\subseteq \CC[y_1,\ldots ,y_n]$ with respect to generic linear flags on $\PP^{n-1}$, where $W^t_{\bullet}$ is the graded linear series of the images of the restriction maps
\[
H^0\big (\overline{X},m\cdot (\pi^*(L)-tE)\big) \ \longrightarrow \ H^0(E,\sO_{E}(mt))
\]
for $m$ sufficiently large and divisible. Asymptotically, $W_{\bullet}^t$ records the hypersurfaces that appear as tangent cones of sections of $L$ with multiplicity $t$ at $x$.
The challenge is to provide an intrinsic geometric interpretation of the family of graded linear series $W_{\bullet}^t$. When $X=C_1\times \ldots \times C_n$, intersection theory on ${\rm Bl}_xX$ implies that $W_{\bullet}^t$ is contained in a family of complete linear series associated to divisors on $X_{\Sigma}$, the blow-up of $\PP^{n-1}$ in $n$ points in general linear position. We show that this inclusion is asymptotically an equality. For this we use that $X_{\Sigma}$ is a toric variety and the asymptotic behaviour of complete linear series on them can be managed through convex geometry. 
As a consequence, the vertical slices  of $\Delta_x(L)$ are NObodies associated to a ray of divisors on $X_{\Sigma}$ with respect to a generic (strict transform) linear flag. Lastly, the global (gluing all $t\geq 0$) NObody on this special ray of divisors on $X_{\Sigma}$ is exactly the convex set described in the statement, as we are able to prove the existence of sections whose valuation are the vertices of the simplex.

For box-products of arbitrary degrees the vertical slices of $\Delta_x(L)$ are still a family of NObodies on $X_{\Sigma}$ with respect to general linear flags. But it is harder to guess the vertices. It seems an interesting problem to find an algorithm that starts with a polytope in $\RR^{n-1}$ that is a NObody on a toric variety with respect to a $T$-invariant flag, and construct another polytope, that will be a NObody on the same toric space but with respect to a generic linear flag.

In dimension $n\leq 3$, we can find non-trivial sections on $X_{\Sigma}$, whose valuations provide the vertices of a slice of the global NObody on $X_{\Sigma}$ with respect to general linear flags. More precisely:

\begin{theorem}\label{thm:smalldimensions}
Let $X=C_1\times\ldots\times C_n$ be a product of smooth complex projective curves, polarized by a box product $L=L_1\boxtimes\ldots\boxtimes L_n$ with $\deg L_i=d_i$. Assume $d_1\geq d_2\geq\ldots\geq d_n>0$. Let $x\in X$ be any point.
\begin{itemize} 
	\item If $n=2$, then $\nob{x}{L}$ is the isosceles trapezoid with vertices $(0,0)$, $ (d_2,d_2)$, $(d_1+d_2,0)$, $(d_1,d_2)$. 
	\smallskip
	
	\item If $n=3$, then $\nob{x}{L}$ is the convex hull of the 9 points 
	$(0,0,0)$, $(d_3,d_3,0)$, $(d_2,d_3,d_2-d_3)$, $(d_1,d_3,d_2-d_3)$,  $(d_1+d_2-d_3,d_3,0)$, $(d_2+d_3,0,d_2+d_3)$, $(d_1+d_3,0,d_2+d_3)$, $(d_1+d_2,0,2d_3)$,  and $(d_1+d_2+d_3,0,0)$,
\end{itemize}
In both cases,  the shapes degenerate to the simplex in Theorem \ref{thm:main} when $d_i=1$ for all $i$, while when the $d_i$ are not all equal the shapes are not simplicial.
\end{theorem}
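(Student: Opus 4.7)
My plan is to follow the slicing blueprint of Theorem \ref{thm:main}: the vertical slice $\inob{x}{L}\cap\{t\}\times\RR^{n-1}$ is the NObody, with respect to a generic linear flag on $E\simeq\PP^{n-1}$, of the graded linear series cut out by restriction to $E$ of sections of $\pi^*L^{\otimes m}\otimes\sO(-\lceil mt\rceil E)$. Asymptotically this is a NObody on the toric variety $X_\Sigma$ (the blow-up of $\PP^{n-1}$ at the $n$ points $p_i\in E$ cut out by the tangent directions of the fiber curves $C_i\times\{x_j\}_{j\neq i}$ through $x$) of a complete linear series whose divisor class varies piecewise linearly with $t$. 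Establishing the claimed shape reduces to producing enough sections with prescribed valuation and a volume check to pin down each slice.

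For $n=2$ the target $X_\Sigma$ is a point, so each slice is an interval and I would compute directly on $\widetilde X=\Bl_x(C_1\times C_2)$. Writing $L=d_1 G_1+d_2 G_2$ with $G_i$ the fibers through $x$, one has $\pi^*L-tE=d_1\widetilde G_1+d_2\widetilde G_2+(d_1+d_2-t)E$, and the intersections $(\pi^*L-tE)\cdot\widetilde G_1=d_2-t$, $(\pi^*L-tE)\cdot\widetilde G_2=d_1-t$, $(\pi^*L-tE)\cdot E=t$ identify the Zariski decomposition on each of the intervals $[0,d_2]$, $[d_2,d_1]$, $[d_1,d_1+d_2]$, peeling off $(t-d_2)\widetilde G_1$ and then $(t-d_1)\widetilde G_2$. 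The width of the slice at height $t$ equals $P_t\cdot E$, namely $t$, $d_2$, and $d_1+d_2-t$ on the three intervals. Each $\widetilde G_i\cap E$ is a single specific point of $E\simeq\PP^1$, so a generic flag point avoids them and the lower endpoint of every slice is identically $0$; this assembles into the claimed isosceles trapezoid, whose area equals $d_1d_2=\vol(L)/2!$.

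For $n=3$ the target $X_\Sigma=\Bl_{p_1,p_2,p_3}\PP^2$ is the toric del Pezzo of degree $6$. I would first identify the divisor class on $X_\Sigma$ whose generic-flag NObody is the slice at height $t$, by reading coefficients $a_i(t)$ off a Zariski-type decomposition of $\pi^*L-tE$ on $\widetilde X=\Bl_x X$. The nine vertices should then arise as valuations of nine explicit sections: the extremes $(0,0,0)$ and $(d_1+d_2+d_3,0,0)$ from a section non-vanishing at $x$ and from a pure tensor $s_1\otimes s_2\otimes s_3$ with each $s_i$ vanishing maximally at $x_i$; and the seven intermediate vertices from sections whose tangent cones on $E$ lie along the hexagon of $(-1)$-curves on $X_\Sigma$ formed by the exceptional divisors $E_i$ and the strict transforms of the three lines $\ell_{ij}=\overline{p_ip_j}$. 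For each such section, computing the valuation with respect to the generic flag $(H,q)$ on $E$ and matching the target vertex is then a concrete intersection-theoretic exercise.

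The main obstacle I expect is the upper bound: that each slice contains no valuation outside the convex hull of the nine constructed vertices. I would argue this by integrating the area of the proposed slices over $t\in[0,d_1+d_2+d_3]$ and matching the total with $\vol(L)/3!=d_1d_2d_3$; combined with the explicit lower-bound sections, this pins every slice down. A secondary subtlety is compatibility of the genericity hypotheses on the flag $(H,q)$ in $E\simeq\PP^2$ with the special line configuration $\{\ell_{ij}\}$ used by the intermediate vertices --- this reduces to the open conditions that $H$ meets each $\ell_{ij}$ transversally and $q\notin\bigcup_{i,j}\ell_{ij}$. Finally, when several $d_i$ coincide some of the nine vertices degenerate; one should verify compatibility with the expected specialization, recovering in particular the simplex of Theorem \ref{thm:main} when all $d_i=1$.
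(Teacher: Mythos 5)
Your overall framework matches the paper's: slice $\inob{x}{L}$ at height $t$, identify each slice with the NObody of a complete linear series on $X_\Sigma = \Bl_{p_1,\ldots,p_n}\PP^{n-1}$ with respect to the strict transform of the generic flag on $E$ (Proposition~\ref{prop:equalslice}), exhibit sections realizing the proposed vertices, and close by a volume check against $\vol(L)/n! = \prod d_i$. For $n=2$ your Zariski-decomposition-on-$\widetilde X$ route is valid (the paper itself notes this alternative to its slice computation on $\PP^1$); a small inaccuracy is that $X_\Sigma$ for $n=2$ is $\PP^1$, not a point, though your conclusion that the slices are intervals is correct.

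The gap is in the $n=3$ case, and it is substantive. You propose to get the seven intermediate vertices from ``sections whose tangent cones on $E$ lie along the hexagon of $(-1)$-curves on $X_\Sigma$ formed by the exceptional divisors $E_i$ and the strict transforms of the three lines $\overline{p_ip_j}$.'' But that hexagon is exactly the torus boundary of $X_\Sigma$: all six curves are $T$-invariant, while the flag $E = Y_1 \supset Y_2 \supset Y_3$ is \emph{very general}, hence the line $Y_2$ coincides with none of the lines $F_k = \overline{p_ip_j}$ and the point $Y_3$ lies on none of them. Consequently any effective divisor supported on the hexagon has valuation $(\nu_2,\nu_3)=(0,0)$, and such sections can only realize vertices of the form $(t,0,0)$. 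They cannot possibly produce, e.g., $(d_3,d_3,0)$, which needs $\nu_2 = d_3 > 0$, or $(d_2+d_3,0,d_2+d_3)$, which needs $\nu_3 = d_2+d_3 > 0$. The paper's key extra input is a family of curves \emph{adapted to the flag}: the line $Y_2$ itself (giving valuation $(1,0)$), the lines $\ell_i$ joining $Y_3$ to $p_i$ (valuation $(0,1)$), and the conic $Q$ through $p_1,p_2,p_3$ tangent to $Y_2$ at $Y_3$ (valuation $(0,2)$). It is from (multiples and sums of) $Y_2$, the $F_i$, the $\ell_i$ and $Q$ that the nine vertices are realized; the hexagon alone provides the $F_i$'s but not the flag-adapted curves carrying the nonzero $(\nu_2,\nu_3)$ data. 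Without this ingredient the lower-bound half of the argument breaks, and the volume check does not save it since you would be comparing against a polytope you never show is contained in $\inob{x}{L}$.

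Your plan for the volume check (integrating slice areas) is a viable alternative to the paper's direct decomposition of the hull into a tetrahedron and two prisms, and is consistent with the quasipolynomial $A(t)$ machinery set up in Section 3; but it only becomes a proof once the vertex sections are supplied.
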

It turns out that the verticies of our iNObodies from Theorem \ref{thm:main} and Theorem \ref{thm:smalldimensions} happen exactly when there is a change in the associated base loci of the graded linear series $W_{\bullet}^t$. 
These ideas are generalized by the authors in \cite{FL25gen} on any projetcive variety $X$ by connecting three aspects of local positivity of a line bundle $L$ on $X$ at a point $x\in X$. On one hand we have infinitesimal successive minima, then jets and finally polytopal upper and lower bounds on the generic iNObodies.
Moreover, in \cite{FL25jac} the authors study the shape of generic iNObodies on Jacobians in small dimensions. The main issue here is realizing at least asymptotically the family $W_{\bullet}^t$ as \emph{complete} graded linear series on a modification of $\PP^{n-1}$. For example, for non-hyperelliptic Jacobian threefolds, $W_{\bullet}^t$ becomes complete on a birational model of $\PP^2$ that is far from being toric, while for the hyperelliptic case we may need to go to the Zariski--Riemann space.

The paper is organized as follows. In Section 2 we review the Newton--Okounkov body construction and their basic properties. In Section 3 we use intersection theory, positivity, toric geometry and Euclidean volume computations to understand the vertical slices as Newton--Okounkov bodies on $X_{\Sigma}$. In Section 4 we treat the case of dimension $n\leq 3$. In Section 5 we use particular incomplete linear systems on $X_{\Sigma}$ to prove that the vertices of the simplex from Theorem \ref{thm:main} do belong to the iNObody, concluding its proof. In Section 6 we state and motivate some open questions.

\subsection*{Acknowledgments} We thank Sam Altschul, Aldo Conca, Rob Lazarsfeld, and Ralf Schiffler for useful conversions. The first named author was partially supported by the Simons travel grant no. 579353. The second named author was partially supported by the Research Project PRIN 2020 - CuRVI, CUP J37G21000000001, and PRIN 2022 - PEKBY, CUP J53D23003840006. Furthermore, he wishes to thank the MIUR Excellence Department of Mathematics, University of Genoa, CUP D33C23001110001. He is also a member of the INDAM-GNSAGA.

\section{Background on (i)NObodies}

\subsection{{Definition and basic properties of NObodies}}
Let $X$ be an $n$-dimensional complex projective manifold. Fix $x\in X$ and let $L$ be a big divisor/line bundle on $X$.
An \emph{admissible flag} is a complete flag 
\[Y_{\bullet}: X=Y_0\supset Y_1\supset\ldots\supset Y_n=\{x\}\] where each $Y_i$ is irreducible of codimension $i$, and smooth at $x$. 
For an effective $\bb R$-divisor $D$ on $X$, let $\nu_1(D) = {\rm ord}_{Y_1}(D)$ and inductively for each $i=2,\ldots ,n$ define 
\[
\nu_i(D)\ = \ {\rm ord}_{Y_i}((D-\nu_{i-1}(D)Y_{i-1})|_{Y_{i-1}})
\] 
Note that $(D-\nu_{i-1}(D)Y_{i-1})|_{Y_{i-1}}$ is a well-defined divisor in $Y_{i-1}$. 
Setting $\nu_{Y_{\bullet}}=(\nu_1,\ldots,\nu_n)$, we define the NObody as the topological closure of the convex body
\[\nob{Y_{\bullet}}{L}\ \deq\  \overline{\bigl\{\nu_{Y_{\bullet}}(D)\ \mid\ D\equiv L,\ D\text{ effective}\bigr\}} \ \subseteq \ \RR^n_+.\] 
The standard definition, e.g. \cite{LM09}, essentially uses $\bb Q$-linear equivalence of $\bb Q$-divisors, not numerical equivalence of $\bb R$-divisors. The definitions are equivalent for big divisors and extend naturally and continuously to big numerical $\bb R$-divisor classes.
Other important general properties:
\begin{enumerate}[(i)]
	\item ${\rm vol}_{\bb R^n}\big(\nob{Y_{\bullet}}{L}\big)=\frac 1{n!}\vol(L)$.
	\smallskip
	
\item %$\nob{Y_{\bullet}}{L}_{\nu_1\geq t} \ = \ (t,0,\ldots,0)\ + \ \nob{Y_{\bullet}}{L-tY_1}$ for all $t\geq 0$ whenever $L-tY_1$ is big. In particular 
$\nob{Y_{\bullet}}{L}$ projects on the $\nu_1$ axis as the segment $[\sigma_{Y_1}(L),\mu(L;Y_1)]$. Here $\sigma_{Y_1}(L)$ is the coefficient of $Y_1$ in the negative part $N_{\sigma}(L)$ of the Nakayama $\sigma$-Zariski decomposition (\cite{Nak04}) of $L$, while 
\[\mu(L;Y_1)\deq\sup\{t\geq 0\ \mid\  L-tY_1\text{ is big}\}.\]
	
		\item If $Y_1\not\subset\Bplus(L)$ 	and $t\geq 0$ with $L-tY_1$ big, then 
		\[\nob{Y_{\bullet}}{L}_{\nu_1=t}\subseteq\nob{Y_{\bullet}}{(L-tY_1)|_{Y_1}}.\] 
		The inclusion is often strict since the smaller subset considers only effective divisors on $Y_1$ that appear as restrictions of effective divisors on $X$. Furthermore, ${\rm Vol}_{\bb R^{n-1}}\nob{Y_{\bullet}}{L}_{\nu_1=t}=\frac 1{(n-1)!}\vol_{X|Y_1}(L-tY_1)$ with the restricted volume computed from sections of $(L-tY_1)|_{Y_1}$ that are the restriction of some section of $L-tY_1$.
		\item As the numerical class of $L$ varies inside the cone of big $\bb R$-classes in $N^1(X)_{\bb R}=N^1(X)\otimes\bb R$, the bodies $\nob{Y_{\bullet}}{L}$ glue in a convex cone in $N^1(X)_{\bb R}\times\bb R^n_{\geq 0}$ whose slice over a big class $\xi\in N^1(X)_{\bb R}$ is $\nob{Y_{\bullet}}{\xi}$. We call this cone the \emph{global NObody} on $X$ with respect to the flag $Y_{\bullet}$.
\end{enumerate}		
			
\subsection{{Definition and basic properties of iNObodies}}\label{sec:definobody}			
	Let $\pi:\overline X\to X$ denote the blow-up of $x$ with exceptional divisor $E\simeq\bb P^{n-1}$. Consider a flag $Y_{\bullet}$ in $\overline X$ such that $Y_{0}=\overline X$, and $Y_1=E\simeq\bb P^{n-1}$, and $Y_{i}$ is a linear $\bb P^{n-i}$ inside $Y_{i-1}$ for all $i>1$. We call $\nob{Y_{\bullet}}{\pi^*L}$ an \emph{infinitesimal NObody (iNObody)}.
		Assuming $x\not\in\Bminus(L)$, e.g., if $L$ is nef, then:
	
\begin{enumerate}[(i)] 
	\item $(0,\ldots,0)\in\inob{Y_{\bullet}}{\pi^*L}$. 
	\smallskip
	
	\item The projection of $\inob{Y_{\bullet}}{\pi^*L}$ on the $\nu_1$-axis is the interval $[0,\mu(L;x)]$, where 
	\[\mu(L;x)\deq\sup\{t\geq 0\ \mid\ L_t\deq\pi^*L-tE\text{ is big}\}\] 
	is the \emph{Fujita--Nakayama (or width)} of $L$ at $x$.
	\smallskip
	
	\item For all $0<t<\mu(L;x)$, we have that 
	\begin{equation}\label{eq:generalupperbound}\inob{Y_{\bullet}}{\pi^*L}_{\nu_1=t}\subseteq t\cdot\nob{Y_{\bullet}}{\cal O_{\bb P^{n-1}}(1)}.\end{equation} 
		
	\item For all \emph{very general} linear flags $Y_{\bullet}$ in $E$ (meaning outside the union of countably many proper closed subsets of the flag manifold), the NObody $\nob{Y_{\bullet}}{\pi^*L}$ is independent of $Y_{\bullet}$ and we call it the \emph{generic infinitesimal NObody (iNObody)} and denote it by $\inob{x}{L}$.
	\smallskip
	
	\item We can define a global iNObody, and a global generic iNObody.
\end{enumerate}

\section{{Vertical slices via multiplicity bounds and toric geometry}}
We prove that the vertical slices of $\inob{x}{L}$ identify with NObodies of complete linear series on the blow-up $\rho: \widetilde E\to E$ of $\bb P^{n-1}$ in the $n$ torus fixed points. The strategy is to show a double inclusion. The upper bound comes from a multiplicity estimate, via computations of positive cones. For the lower bound we draw on the toric structure of $\widetilde E$. 

Let $X=C_1\times\ldots\times C_n$ and $x=(c_1,\ldots,c_n)\in X$. Denote by $h_i$ the numerical class of the ``coordinate'' hypersurfaces $H_i={\rm pr}_i^{-1}\{c_i\}$ through $x$. By abuse of notation denote further by 
\[C_1\deq C_1\times\{(c_2,\ldots,c_n)\}\subset C_1\times\ldots\times C_n\] and similarly define $C_i\subset X$.
For $\pi:\overline X\to X$ the blow-up of $x$ with exceptional divisor $E\simeq\bb P^{n-1}$, the strict transforms $\overline C_i$ of the $C_i$ are now pairwise disjoint. 

\begin{lemma}[Positive cones on $X$ and $\overline X$]$ $
	\begin{enumerate}
		\item On $X$, the class $D=\sum_{i=1}^nd_ih_i$ is pseudoeffective if and only if $d_i\geq 0$ for all $i$.
		\smallskip
		
		\item Assume $n\geq 2$. On $\overline X$, the class of $D=\sum_{i=1}^nd_i\overline H_i+eE$ is pseudoeffective if and only if $d_i\geq 0$ for all $i$ and $e\geq 0$. In particular, for $f\geq 0$, a class $\sum_{i=1}^nc_i\pi^*h_i-fE$ is pseudoeffective if and only if $c_i\geq 0$ for all $i$ and $f\leq\sum_{i=1}^nc_i$. 
	\end{enumerate}
\end{lemma}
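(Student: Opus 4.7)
The plan is to combine the duality between pseudoeffective divisors and movable curves with a multiplicity estimate on $X$ to pin down the $E$-coefficient. Both backward directions are immediate: $\sum d_i H_i$ is effective in Part~(1), and $\sum d_i \overline H_i + eE$ is a non-negative combination of irreducible effective divisors in Part~(2). The forward direction of Part~(1) is standard: each curve $C_j$ moves in the $(n-1)$-parameter family of translates $\prod_{i\neq j} C_i$ that covers $X$, so $[C_j]$ is a movable class, and the pairing $D \cdot [C_j] = d_j$ gives $d_j \geq 0$ by pseudoeffectiveness.

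For the forward direction of Part~(2), I would first obtain $d_i \geq 0$ by intersecting $D$ with $\pi^*[C_j]$, a movable class on $\overline X$ witnessed by the lift of a generic $j$-direction translate of $C_j$ that misses $x$ and is thus disjoint from $E$; a direct computation gives $\pi^*[C_j] \cdot D = d_j$. To obtain $e \geq 0$, I would recast the claim as a multiplicity estimate on $X$: writing $D = \pi^*A - fE$ with $A = \sum d_i h_i$ and $f = \sum d_i - e$, the inequality $e \geq 0$ is equivalent to $f \leq \sum d_i$. Suppose for contradiction $f > \sum d_i$, fix $H_0$ ample on $X$ and $\beta > 0$ small enough that $H = \pi^*H_0 - \beta E$ is ample on $\overline X$, and consider the class $D + \epsilon H = \pi^*(A + \epsilon H_0) - (f + \epsilon\beta)E$ for small $\epsilon > 0$. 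This class lies in the interior of the pseudoeffective cone, hence is big and admits an effective $\RR$-divisor representative $D'_\epsilon$. Decomposing $D'_\epsilon = a_E E + D''_\epsilon$ with $a_E \geq 0$ and $D''_\epsilon$ having no $E$-component, $D''_\epsilon$ is the strict transform of $\pi_* D''_\epsilon$; comparing classes yields $\mult_x(\pi_* D''_\epsilon) = a_E + f + \epsilon\beta \geq f + \epsilon\beta$, with $\pi_* D'_\epsilon$ an effective $\RR$-divisor representative of $A + \epsilon H_0$ on $X$.

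The hard part will be the auxiliary multiplicity bound on $X$: any effective $\RR$-divisor representative of $\sum c_i h_i$ on $X = C_1\times\ldots\times C_n$ has multiplicity at most $\sum c_i$ at $x$. I plan to establish this by classifying prime divisors with class in the sublattice $\bigoplus \RR h_i$: such a prime $\Gamma$ must satisfy $\Gamma \cdot [C_i] = 0$ for all but one index $j$, and iterating this forces $\mathrm{pr}_i(\Gamma)$ to collapse to a point for each $i \neq j$, so $\Gamma$ is a fiber of $\mathrm{pr}_j$. Consequently any effective representative of $\sum c_i h_i$ has the form $\sum_j \mathrm{pr}_j^* E_j$ with $E_j$ effective of degree $c_j$ on $C_j$, whose multiplicity at $x$ is $\sum_j \mult_{c_j}(E_j) \leq \sum_j c_j$. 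Applying this to $\pi_* D'_\epsilon$ yields $f + \epsilon\beta \leq \sum d_i + \epsilon \sum h_{0,i}$; letting $\epsilon \to 0$ gives $f \leq \sum d_i$, contradicting the assumption $f > \sum d_i$. The final ``in particular'' assertion then follows by translating between the $\{\overline H_i, E\}$ and $\{\pi^* h_i, E\}$ bases via $c_i = d_i$ and $f = \sum d_i - e$.
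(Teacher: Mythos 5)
Your argument for Part~(1) and for $d_i\geq 0$ in Part~(2) is fine and matches the paper in spirit. The reduction of $e\geq 0$ to a multiplicity bound at $x$ for effective representatives of $\sum c_ih_i$ on $X$ is also a legitimate strategy. The gap is in your proof of that multiplicity bound.

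You assert that a prime divisor $\Gamma$ with numerical class in $\bigoplus_i\RR h_i$ must satisfy $\Gamma\cdot[C_i]=0$ for all but one index $j$, and hence be a fiber of some $\mathrm{pr}_j$; consequently every effective representative of $\sum c_ih_i$ is of the form $\sum_j\mathrm{pr}_j^*E_j$. Neither claim is justified, and both are false. Take $C_1=C_2=\PP^1$ and let $\Delta\subset\PP^1\times\PP^1$ be the diagonal: it is a prime divisor with class $h_1+h_2$, yet $\Delta\cdot[C_1]=\Delta\cdot[C_2]=1$ and $\Delta$ is not a fiber of either projection. For $n\geq 3$, the prime divisor $\Delta\times C_3\times\cdots\times C_n$ in $(\PP^1)^2\times C_3\times\cdots\times C_n$ has class $h_1+h_2$ and is not of the form $\sum_j\mathrm{pr}_j^*E_j$. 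More generally, the graph of any nonconstant morphism $C_1\to\PP^1$ gives a prime divisor of class $(\deg f)h_1+h_2$ on $C_1\times\PP^1$ that is not a coordinate hypersurface. Since the lemma must in particular hold on $(\PP^1)^n$, your classification step cannot be repaired as stated, and the multiplicity bound is left unproven (it is true, but only as a consequence of the lemma you are trying to prove, so invoking it here would be circular).

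The paper avoids this altogether: after reducing to the case where the effective $D$ contains no $\overline H_i$ in its support, it restricts $D$ to $\overline H_n$, which is the analogous blow-up in one lower dimension, and inducts; the base case $n=2$ follows by pairing an irreducible component with the $(-1)$-curve $\overline H_1$. If you want to keep your multiplicity-bound formulation, you would still need an independent argument, for instance one following the same restriction-and-induction pattern.
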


\noindent It follows from (2), that if $D$ is as in (1), then $\mu(D;x)=\sum_id_i$.

\begin{proof}
	For both parts one implication is trivial. It is sufficient to treat the case where $D$ is effective.
	
	(1) The curves $C_i\subseteq X$ are movable hence $D\cdot C_i\geq 0$ for all $i$, and $h_i\cdot C_j=\delta_{ij}$, hence $D\cdot C_i=d_i$ for all $i$.
	
	(2) Since $\pi_*D=\sum_id_ih_i$ is also effective, then $d_i\geq 0$ for all $i$ by (1). Without loss of generality, suppose $D$ does not have any of the $\overline H_i$ in its support. Then $D|_{\overline H_n}\equiv\sum_{i=1}^{n-1}d_i\overline H_i+eE$ is effective, in one dimension less. Conclude by induction. For the base case $n=2$, note that $\overline H_1$, $E$, $\overline H_2$ are $-1$ curves. If $C$ is an irreducible curve distinct from them, and $C\equiv d_1\overline H_1+d_2\overline H_2+eE$ with $d_1,d_2\geq 0$, then $C\cdot\overline H_1\geq 0$ forces $e\geq 0$.
\end{proof}

The following lemma provides some necessary conditions that are satisfied by the restricted linear series $W_{\bullet}^t$ from the introduction. It shows how far away they are from not being a complete series on $E=\PP^{n-1}$.

\begin{lemma}\label{lem:nonakamaye}
	Let $D\equiv\sum_{i=1}^nd_ih_i$ where $d_i\geq 0$. Denote $D_t\deq\pi^*D-tE$. Then for all $t\in[0,\ldots,d_1+\ldots+d_n]$ and for every effective divisor $D'$ numerically equivalent to $D_t$, we have
	\[\mult_{\overline C_i}D'\geq t-d_i\]
	for all $i$. 
\end{lemma}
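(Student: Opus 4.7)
The plan is to push the problem down to $X$, reduce it to a pointwise inequality for prime divisors, and prove that inequality by a directional specialization argument.

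Every effective $D'$ on $\overline X$ decomposes uniquely as $D' = \sum_\alpha a_\alpha \tilde V_\alpha + a E$ with $V_\alpha$ prime divisors on $X$, $\tilde V_\alpha$ their strict transforms, and $a \geq 0$. Using $\pi^* V_\alpha = \tilde V_\alpha + \mult_x(V_\alpha)\,E$ and matching against $\pi^*D - tE$, one gets $D'' := \pi_*D' = \sum a_\alpha V_\alpha \equiv D$ and $a = \mult_x D'' - t$; in particular $\mult_x D'' \geq t$. Since $\pi$ is an isomorphism at a generic point of $\overline C_i$ (which lies off $E$) and sends that point to $C_i$, we also have $\mult_{\overline C_i} D' = \mult_{C_i} D''$. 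Hence the lemma reduces to the intrinsic statement on $X$:
\[
\mult_{C_i} D'' \;\geq\; \mult_x D'' - d_i
\]
for every effective $D''$ of class $\sum d_k h_k$. Both sides are additive over prime components, so it suffices to prove, for every prime $V \subset X$ of class $\sum e_k h_k$,
\[
\mult_x V \;\leq\; \mult_{C_i} V \;+\; V \cdot C_i,
\]
where $V \cdot C_i = e_i$ by the intersection relations $h_k \cdot C_j = \delta_{kj}$.

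To prove this last inequality, pick local coordinates $z_k = y_k - c_k$ at $x$, so $C_i = \{z_k = 0 : k \neq i\}$, and expand a local equation of $V$ as $f = \sum_\alpha c_\alpha z^\alpha$. With $e = \mult_x V$ and $m = \mult_{C_i} V$, each nonzero term satisfies $|\alpha| \geq e$ and $\sum_{k \neq i}\alpha_k \geq m$. For a generic direction $w = (w_k)_{k \neq i}$ and small $s \neq 0$, the translated curve $C_{i,sw}$ is disjoint from $C_i$, so $V$ meets it properly in $V \cdot C_i = e_i$ points. Near $x$, these are the zeros (in $z_i$) of
\[
g_s(z_i) \;=\; f(sw_1, \ldots, z_i, \ldots, sw_n) \;=\; s^m\,\tilde g_s(z_i),
\]
whose leading coefficient $\tilde g_0(z_i) = \sum_{\sum_{k \neq i}\alpha_k = m} c_\alpha z_i^{\alpha_i}\prod_{k \neq i} w_k^{\alpha_k}$ is nonzero for generic $w$ and vanishes at $z_i = 0$ to order at least $\min\{\alpha_i : c_\alpha \neq 0,\ \sum_{k \neq i}\alpha_k = m\} \geq e - m$. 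Thus at least $e - m$ of the $e_i$ intersection points of $V \cap C_{i,sw}$ limit to $x$ as $s \to 0$, forcing $e - m \leq e_i$.

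The main obstacle is the degeneration step: one must recognize the factor $s^m$ as exactly encoding the generic $C_i$-multiplicity of $V$, so that the leading coefficient $\tilde g_0$ carries the remaining order of vanishing at $x$ and precisely accounts for the excess $\mult_x V - \mult_{C_i} V$. The case $V \not\supset C_i$ (where $m = 0$) recovers the classical bound $\mult_x V \leq V \cdot C_i$, and the argument unifies both cases. The rest is bookkeeping via the pseudoeffective-cone description from the preceding lemma and the standard blow-up identity $\pi^* V = \tilde V + \mult_x(V)\,E$.
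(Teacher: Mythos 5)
Your proof is correct, but it takes a genuinely different route from the paper's. The paper blows up $\overline X$ along the curves $\overline C_i$, observes that the exceptional divisor $E_i$ is $C_i\times\bb P^{n-2}$, computes that $(\rho^*D_t-mE_i)|_{E_i}$ has bidegree $(d_i-t+m,\,m)$, and invokes the (trivial) description of the pseudoeffective cone of a curve times a projective space. You instead push everything forward to $X$: from the decomposition $D'=\pi^*D''+(a-\mult_xD'')E$ you deduce $\mult_xD''\geq t$ and $\mult_{\overline C_i}D'=\mult_{C_i}D''$, reduce by additivity to the pointwise inequality $\mult_xV\leq\mult_{C_i}V+V\cdot C_i$ for prime $V$, and establish that by a local degeneration/Rouch\'e argument, letting $C_i$ move in its $(n-1)$-parameter family of fibers and counting intersection points of $V$ with the nearby fiber that collapse to $x$. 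Your argument is more elementary in that it avoids the auxiliary blow-up and any appeal to positive-cone descriptions, using only the bound ``local intersection number near $x$ is at most the global intersection number''; the paper's is shorter and cleaner and fits naturally into the intersection-theoretic setup of the section. Two cosmetic inaccuracies in your write-up that do not affect correctness: writing $V\equiv\sum e_kh_k$ for an arbitrary prime $V$ is not valid, since $N^1(X)$ need not be spanned by the $h_i$ when the curves have positive genus (you only need the number $V\cdot C_i$, not a class decomposition); and the inference ``$C_{i,sw}$ is disjoint from $C_i$, so $V$ meets it properly'' is a non sequitur --- the correct reason is that the fibers $C_{i,q}$ sweep out a dense open set, so $V$ cannot contain the generic one.
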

\begin{proof}
Let $\rho:\widetilde X\to\overline X$ be the blow-up of all $\overline C_i$ with exceptional divisors $E_i$. As $\overline C_i$ is the complete intersection of the $\overline H_j$ with $j\neq i$ and $\overline H_j\equiv\pi^*H_j-E$, then $E_i\simeq C_i\times\bb P^{n-2}$ and $-E_i|_{E_i}$ represents $\cal O_{C_i}(c_i)\boxtimes\cal O_{\bb P^{n-2}}(1)$.

It is sufficient to prove that $(\rho^*D_t-mE_i)|_{E_i}$ is not pseudoeffective for $m<t-d_i$. To start note that
$D$ restricts to $C_i\subset X$ as a divisor of degree $d_i$. Then $D_t$ restricts to $\overline C_i$ as a divisor of degree $d_i-t$. Finally $\rho^*D_t-mE_i$ restricts to $E_i\simeq C_i\times\bb P^{n-2}$ as a bidegree $(d_i-t+m,m)$ divisor. This is only pseudoeffective when $m\geq 0$ and $m\geq t-d_i$.
\end{proof}

As a consequence the graded linear series $W_{\bullet}^t$ is contained in a complete linear series on the blow-up $\rho: \widetilde{E}\rightarrow E$ of the points $p_i\deq\overline C_i\cap E$, viewed as the toric variety $X_{\Sigma}$ that is the blow-up of $\PP^{n-1}$ at $n$ torus invariant points. Our goal is to show that asymptotically in terms of NObodies these two graded linear series coincide when $t$ varies.
\begin{proposition}\label{prop:equalslice}
	Let $d_1,\ldots,d_n>0$ and $D=\sum_{i=1}^nd_ih_i$. For all $t\in(0,d_1+\ldots+d_n)$ we have
	\[\inob{x}{D}_{\nu_1=t}=\nob{\widetilde Y_{\bullet}}{t\rho^*H-\sum\nolimits_{i=1}^n(t-d_i)E_i},\]
	where $Y_{\bullet}$ is a very general linear flag in $E$, $\widetilde Y_{\bullet}$ is its strict transform in $\widetilde E$, $H$ is the hyperplane class in $\bb P^{n-1}$ and $E_i$ is the exceptional divisors of $\rho$ over $p_i$.
\end{proposition}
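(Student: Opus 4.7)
My strategy is a double inclusion of graded linear series on $E$, promoted to an equality of NObodies. Following \cite{LM09}, the slice $\inob{x}{D}_{\nu_1=t}$ is the NObody $\nob{Y_\bullet}{W^t_\bullet}$ of the restricted graded linear series
\[
W^t_m:=\textup{Image}\bigl(H^0(\overline X,m(\pi^*D-tE))\to H^0(E,\sO_E(mt))\bigr),
\]
while the right-hand side is the NObody of the complete linear series for $M:=t\rho^*H-\sum_i(t-d_i)E_i$ on $\widetilde E$.

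For the upper bound, Lemma \ref{lem:nonakamaye} forces every effective representative of $m(\pi^*D-tE)$ to have multiplicity at least $m(t-d_i)$ along $\overline C_i$, so each $s\in W^t_m$ vanishes to order at least $m(t-d_i)$ at $p_i=\overline C_i\cap E$; its pullback $\rho^*s$ therefore lies in $H^0(\widetilde E,mM)$. Because the flag $Y_\bullet$ is very general it avoids $\{p_1,\ldots,p_n\}$, so $\rho$ restricts to an isomorphism over each piece of $Y_\bullet$ and the valuations of $s$ via $Y_\bullet$ and of $\rho^*s$ via $\widetilde Y_\bullet=\rho^{-1}(Y_\bullet)$ coincide. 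This yields $\inob{x}{D}_{\nu_1=t}\subseteq\nob{\widetilde Y_\bullet}{M}$.

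For the reverse inclusion I would compare the $(n-1)$-volumes. Since the $p_i$ lie in general linear position, a suitable $(\CC^*)^{n-1}$-action on $E\simeq\PP^{n-1}$ makes them the torus-fixed points, $\widetilde E$ becomes toric, and $H^0(\widetilde E,mM)$ is spanned by the torus-invariant monomials $Y_1^{a_1}\cdots Y_n^{a_n}$ indexed by lattice points $\underline a\in mP_t$ where $P_t:=\{\underline a\in\RR^n_{\geq 0}\ :\ \sum_i a_i=t,\ a_i\leq d_i\}$. To exhibit each such monomial as the initial term of some element of $W^t_m$, I would pick, for each $a_i$ in the range $0\leq a_i<md_i$, a section $s_i\in H^0(C_i,mL_i)$ with $\ord_{c_i}s_i=a_i$ (Riemann--Roch provides these for $m\gg 0$) and take $s:=\prod_i s_i\in H^0(X,mD)$; in local coordinates $y_i$ at $c_i$ the product factors as $\prod_i(b_iy_i^{a_i}+\ldots)$ with $b_i\neq 0$, so $\ord_x s=\sum_i a_i=mt$ and the initial term, viewed on $E=\PP(T_xX)$, is a nonzero scalar multiple of $Y_1^{a_1}\cdots Y_n^{a_n}$.

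The main obstacle is the boundary: the product construction can fail when some $a_i=md_i$, as it would require the torsion relation $mL_i\sim md_ic_i$ on $C_i$. However the missed lattice points lie on codimension-one faces of $mP_t$ and contribute only $O(m^{n-2})$ terms, negligible against $\dim H^0(\widetilde E,mM)\sim\tfrac{\vol(M)}{(n-1)!}m^{n-1}$. Thus $\dim\rho^*W^t_m/\dim H^0(\widetilde E,mM)\to 1$, forcing the $(n-1)$-volumes of the two NObodies to agree. Since both are closed convex bodies and the first is contained in the second with equal volume, they must coincide.
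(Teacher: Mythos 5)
Your proof is correct and the overall architecture (the ``$\subseteq$'' inclusion from Lemma~\ref{lem:nonakamaye} plus a volume comparison to upgrade it to equality) matches the paper, but your treatment of the volume step is genuinely different and, in fact, more explicit. The paper sidesteps the need to produce any sections at all: it shows the required equality of volumes only \emph{on average} by integrating $\vol_{\RR^{n-1}}$ of the right-hand side over $t\in[0,\sum d_i]$, computing that integral via the toric polytope $P'_t$, and matching it against $\vol_{\RR^n}(\Delta_x(D))=\prod d_i$; since the slice-by-slice inequality already holds, continuity in $t$ forces equality of volumes for every $t$. You instead realize, for each fixed rational $t$, (almost) every monomial basis element of $H^0(\widetilde E,mM)$ as the tangent cone at $x$ of a box-product section $s_1\otimes\cdots\otimes s_n\in H^0(X,mD)$ with prescribed vanishing orders $a_i$ at the $c_i$. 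This is more constructive and makes transparent exactly which divisors produce the slice. Two small remarks. First, Riemann--Roch guarantees a section of $mL_i$ with $\ord_{c_i}=a_i$ only in the range $a_i\leq md_i-2g(C_i)$, not quite up to $md_i-1$ as you write; but the lattice points you thereby miss still lie in $O(m^{n-2})$-thick bands along codimension-one faces of $mP_t$, so the asymptotic volume comparison is unaffected. Second, your dimension count implicitly uses that the constructed sections land in $W^t_m$ with linearly independent images (their initial forms are distinct monomials), and that $\vol(\Delta(W^t_\bullet))=\lim_m \dim W^t_m/m^{n-1}$; both hold, but the latter needs $W^t_\bullet$ to satisfy the growth condition of \cite{LM09}, which is guaranteed here since $W^t_\bullet$ is sandwiched between linear series with the same top-degree growth. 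You should also note explicitly that your argument runs for rational $t$ and extends to all $t$ by continuity, as the paper does.
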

\begin{proof}
	The very general choice of $Y_{\bullet}$ is implicit in the definition of $\inob{x}{D}$. We will see in the proof that any linear flag that avoids the points $p_i$ will suffice.
	
	We explain how Lemma \ref{lem:nonakamaye} gives the ``$\subseteq$'' inclusion. If $D'$ is an effective $\bb R$-divisor, numerically equivalent to $D$ and such that ${\rm mult}_xD'=t$, then the strict transform in $\overline X$ satisfies that $\overline{D'}\cap E$ is a degree $t$ effective divisor on $\bb P^{n-1}$ that passes through $p_i$ with multiplicity at least $t-d_i$ by Lemma \ref{lem:nonakamaye}, and has valuation $(\nu_2(D'),\ldots,\nu_n(D'))$ with respect to the linear flag $E=\bb P^{n-1}=Y_1\supset\ldots\supset Y_n$.
	The strict transform of this divisor in $\widetilde E$ determines an element of the complete linear system of $t\rho^*H-\sum_i(t-d_i)E_i$ with the same valuation with respect to the strict transform flag.
	
	For the reverse inclusion it is sufficient to prove that the convex bodies have the same Euclidean volume. We have the ``$\leq$'' inequality for each $t$.	
	It is clear that if we let $t$ vary in $[0,d_1+\ldots+d_n]$, 
	the volumes of both sides vary continuously.
	It is then enough to prove that 
	\[
	{\rm vol}_{\bb R^n}\big(\nob{x}{D}\big) \ = \ \int\nolimits_{t=0}^{d_1+\ldots+d_n}{\rm vol}_{\bb R^{n-1}}\big(\nob{\widetilde Y_{\bullet}}{t\rho^*H-\sum\nolimits_{i=1}^n(t-d_i)E_i}\big)dt
	\]
As $(D^n)=n!\cdot\prod_{i=1}^nd_i$, then by \cite[Theorem A]{LM09}  the Euclidean volume of $\nob{x}{D}$ is $\prod_{i=1}^nd_i$. Similar the volume being integrated on the right is actually $\frac 1{(n-1)!}\cdot\vol_{\widetilde{E}}(t\rho^*H-\sum(t-d_i)E_i)$. This can be computed with toric means since $\widetilde E$ is a toric variety. The material below is inspired by \cite[Section 3.1]{CDDGP11}. For a quick introduction to the theory of toric varieties: fans, polytope associated to a divisor, etc, see \cite{fulton93}.
			
	The fan of $\bb P^{n-1}$ is given by the simplicial fan with the $n$ rays $f_1,\ldots, f_{n}$ in $N_{\bb R}=\bb R^{n-1}$ where $f_1,\ldots, f_{n-1}$ are the standard basis and $f_n=-\sum_{i=1}^{n-1}e_i$. The fan of $\widetilde E$, the blow-up $\rho:\widetilde E\to\bb P^{n-1}$ of the $T$-invariant points $p_1,\ldots,p_n$ is obtained by refining this fan with the rays $e_i=-f_i$. The associated $T$-invariant divisors are denoted $F_i$ for the $f_i$, and $E_i$ for the $e_i$. The divisor $F_i$ is the strict transform in $\widetilde E$ of $\overline H_i\cap E$, the linear span  of $(p_j)_{j\neq i}$, and $E_i$ is the exceptional divisor of the blow-up of $p_i$. If $H$ denotes the hyperplane class in $E\simeq\bb P^{n-1}$, then we have the relations
	\[F_i=\rho^*H-\sum\nolimits_{j\neq i}E_i.\]	
	For our classes of interest we have
	\[t\rho^*H-\sum\nolimits_{i=1}^{n}(t-d_i)E_i=\sum\nolimits_{i=1}^n\frac tn F_i+\sum\nolimits_{i=1}^n(d_i-\frac tn)E_i\ .\]
	Their associated polytopes in $M_{\bb R}=N_{\bb R}^{\vee}$ are
	\[
	\begin{split}
		P_{t} \  & \deq \ \{m=(m_1,\ldots ,m_{n-1})\in M_{\RR} \ | \ \langle m,f_i\rangle \geq -\frac{t}{n},\ \langle m,e_i\rangle \geq \frac{t}{n}-d_i\}\\
		& = \ \{(m_1,\ldots ,m_{n-1})\in M_{\RR} \ | \ m_i\geq -\frac{t}{n},\  \sum m_i\leq \frac{t}{n},\ m_i\leq d_i-\frac{t}{n},\ \sum m_i\geq \frac{t}{n}-d_n\}
	\end{split}
	\]
	We have ${\rm vol}_{\bb R^{n-1}}\big(\nob{\widetilde Y_{\bullet}}{t\rho^*H-\sum_i(t-d_i)E_i}\big)=\frac 1{(n-1)!}\vol_{\widetilde{E}}(t\rho^*H-\sum_i(t-d_i)E_i)={\rm vol}_{\bb R^{n-1}}\big(P_t\big)$. 
	After the translations $m_i\mapsto m_i+\frac tn$, we have that $P_t$ has the same volume as
	\[
	P'_t\deq\bigl\{(m_1,\ldots,m_{n-1})\in\bb R^{n-1}\ \mid\ 0\leq m_i\leq d_i,\ t-d_n\leq \sum\nolimits_{i=1}^{n-1}m_i\leq t\bigr\}\ .
	\]
	Let
	\[
	A(t)\deq{\rm vol}_{\bb R^{n-1}}\biggl(\bigl\{(m_1,\ldots,m_{n-1})\in\bb R^{n-1}\ \mid\ 0\leq m_i\leq d_i,\ \sum\nolimits_{i=1}^{n-1}m_i\leq t\bigr\}\biggr)\ .
	\]
	We have ${\rm vol}_{\bb R^{n-1}}\big(P'_t\big)=A(t)-A(t-d_n)$. As $A(t)=0$ if $t\leq 0$ and $A(t)=\prod_{i=1}^{n-1}d_i$ for $t\geq d_1+\ldots+d_{n-1}$, then
	\[
	\int\nolimits_{t=0}^{d_1+\ldots+d_n}{\rm vol}_{\bb R^{n-1}}\big(P'_t\big)dt=\int\nolimits_{t=0}^{d_1+\ldots+d_n}(A(t)-A(t-d_n))dt=\int\nolimits_{t=d_1+\ldots+d_{n-1}}^{d_1+\ldots+d_n}A(t)dt=\prod\nolimits_{i=1}^nd_i\ .
	\]
The conclusion follows.
\end{proof}
\begin{remark}
	The function $A(t)$ is quasipolynomial, changing branches whenever $t$ is the sum of coordinates of a vertex of the box $\prod_{i=1}^{n-1}[0,d_i]$. The argument above implies that for all (possibly empty) $I\subseteq\{1,\ldots,n\}$ there exists a vertex in $\nob{x}{D}$ with $\nu_1$-coordinate $\sum_{i\in I}d_i$. The case $n=3$ of Theorem \ref{thm:smalldimensions} shows that these are not all the vertices of $\nob{x}{D}$ in general.
\end{remark}

\section{Small dimensions}

When $n=1$, then $\inob{x}{L}=[0,\deg L]$. The cases $n=2$ and $n=3$ are the subject of Theorem \ref{thm:smalldimensions}.

\begin{proof}[{\bf Proof of Theorem \ref{thm:smalldimensions}}]
When $n=2$, then $E=\bb P^1$, and $E_i=p_i$ are the two torus-fixed points. 
By Proposition \ref{prop:equalslice}, we are interested in degree $t$ divisors with multiplicity at least $t-d_i$ at $p_i$. This should be phrased as multiplicity at least $\max\{t-d_i,0\}$ at $p_i$. For $n\geq 3$ the distinction was irrelevant, essentially because if $t\geq 0$, then $\sigma^*D$ and $\sigma^*D+tE$ have isomorphic global sections when $\sigma$ is a blow-up with exceptional divisor $E$ of a point on a variety of dimension at least $2$.
It follows that $\inob{x}{L}_{\nu_1=t}$ is $\nob{Y_{\bullet}}{tY_2-\max\{t-d_1,0\}p_1-\max\{t-d_2,0\}p_2}$ where $Y_{\bullet}$ is the flag $E=\bb P^1=Y_1\supset Y_2$ and $Y_2$ is a (very) general point. This is the line segment $[0,t-\max\{t-d_1,0\}-\max\{t-d_2,0\}]$. These segments are the vertical slices of the claimed isosceles trapezoid. Other proofs when $n=2$ can be obtained by Zariski decompositions on surfaces via \cite[Theorem 6.4]{LM09}, or making use of Lemma \ref{lem:nonakamaye} and the ideas developed in \cite[Proposition 4.2]{KL18}.
\smallskip

Let now $n=3$. We first show that the convex hull of the 9 points in the conclusion has Euclidean volume $d_1d_2d_3$, same as the volume of $\nob{x}{L}$. To see this, note that we can decompose the convex hull into the three convex bodies below:
\begin{enumerate}
	\item A tetrahedron as in Theorem \ref{thm:main} for $n=3$, but scaled by $d_3$. Its vertices are $(0,0,0)$, $(d_3,d_3,0)$, $(2d_3,0,2d_3)$, $(3d_3,0,0)$. Its Euclidean volume is $d_3^3$.
	\smallskip
	
	\item A tringular prism. One triangular face is shared with the tetrahedron and has vertices
	\[(d_3,d_3,0),\ (2d_3,0,2d_3),\ (3d_3,0,0)\]
	The opposite parallel face has corresponding vertices
	\[(d_1+d_2-d_3,d_3,0),\ (d_1+d_2,0,2d_3),\ (d_1+d_2+d_3,0,0)\]
	One of its parallelogram faces is in the plane $\nu_3=0$ where it identifies with the parallelogram with vertices $(d_3,d_3)$, $(d_1+d_2-d_3,d_3)$, $(3d_3,0)$, $(d_1+d_2+d_3,0)$. The area of the parallelogram is $(d_1+d_2-2d_3)\cdot d_3$. The opposite parallel edge to this parallelogram is at height $2d_3$. Therefore the volume of this prism is $(d_1+d_2-2d_3)d_3^2$.
	\smallskip
	
	\item A prism over the isosceles trapezoid with vertices
	\[(2d_3,0,2d_3),\ (d_1+d_2,0,2d_3),\ (d_1+d_3,0,d_2+d_3),\ (d_2+d_3,0,d_2+d_3)\]
	inside the plane $\nu_2=0$ and with opposite face at ``height'' $d_3$ (in the perpendicular $\nu_2$-direction) the trapezoid with corresponding vertices
	\[(d_3,d_3,0),\ (d_1+d_2-d_3,d_3,0),\ (d_1,d_3,d_2-d_3),\ (d_2,d_3,d_2-d_3)\]
	The area of the trapezoid is $(d_1-d_3)(d_2-d_3)$, and the volume of this prism is $(d_1-d_3)(d_2-d_3)d_3$. The prism shares a parallelogram face with the triangular prism and an edge with both the triangular prism and the tetrahedron.
\end{enumerate}

The volumes above add up to $d_1d_2d_3$ as claimed. Denote this convex hull by $C$. For volume reasons, it is sufficient to prove that $C\subseteq \nob{x}{L}$, equivalently that the 9 vertices of $C$ are contained in $\nob{x}{L}$.

The vertex $(0,0,0)$ is the valuation of any effective divisor that does not pass through $x$. The other easy vertex $(d_1+d_2+d_3,0,0)$ is the valuation with respect to a general linear flag $Y_{\bullet}$ in $E$ of the divisor $d_1H_1+d_2H_2+d_3H_3$ numerically equivalent to $L$. As in Section 3, the $H_i$ are the ``coordinate'' hypersurfaces through $x$. For the remaining points we use the interpretation of the vertical slices in Proposition \ref{prop:equalslice}.

In what remains, denote by $(d;a_1,a_2,a_3)$ the linear system of degree $d$ curves in $\bb P^2$ with multiplicity at least $a_i$ at the $p_i$. Note that if for example $a_1$ is negative, then $(d;a_1,a_2,a_3)=(d;0,a_2,a_3)$. The linear system identifies essentially by strict transforms with the complete system of $d\rho^*H-\sum_ia_iE_i$ on $\widetilde E$. 
The valuation with respect to $E=Y_1\supset Y_2$ for elements of $(d;a_1,a_2,a_3)$ is the same as the valuation with respect to the strict transform flag for the corresponding sections of $d\rho^*H-\sum_ia_iE_i$. Note that the strict transform flag is well-defined if $Y_2\neq p_i$.

Consider the following curves in $\bb P^2$:
\begin{itemize}
	\item $F_i$ is the line through $p_j$ and $p_k$ for all $\{i,j,k\}=\{1,2,3\}$. For example $F_1\in(1;0,1,1)$ and its valuation is $(0,0)$ with respect to our flag. 
	\smallskip
	
	\item $Y_2$ is the line in the flag, a very general line in $\bb P^2$. We have $Y_1\in(1;0,0,0)$ and its valuation is $(1,0)$.
	\smallskip
	
	\item $\ell_i$ the line joining the flag point $Y_3$ with $p_i$. For instance $\ell_1\in(1;1,0,0)$ and its valuation is $(0,1)$.
	\smallskip
	
	\item The conic $Q$ through $p_1$, $p_2$, $p_3$ and tangent to $Y_2$ at the point $Y_3$. It is an element of $(2;1,1,1)$ with valuation $(0,2)$.
\end{itemize}

The following list of divisors in $\bb P^2$, satisfy the conditions from Proposition \ref{prop:equalslice}, and provide modulo the above flag valuation the remaining vertices of our body $\inob{x}{L}$.

\begin{enumerate}
	\item $d_3\cdot Y_2\in (d_3;0,0,0)=(d_3;d_3-d_1,d_3-d_2,0)$ has valuation $(d_3,0)$ with respect to our flag in $\bb P^2$, and shows that $(d_3,d_3,0)\in\inob{x}{L}$.
	\item $d_3Y_2+(d_2-d_3)\ell_3\in(d_2;0,0,d_2-d_3)=(d_2;d_2-d_1,0,d_2-d_3)$ has valuation $(d_3,d_2-d_3)$ and determines the vertex $(d_2,d_3,d_2-d_3)\in\inob{x}{L}$.
	\item $d_3Y_2+(d_1-d_2)F_1+(d_2-d_3)\ell_3\in(d_1;0,d_1-d_2,d_1-d_3)$ has valuation $(d_3,d_2-d_3)$ and determines the vertex $(d_1,d_3,d_2-d_3)\in\inob{x}{L}$.
	\item $d_3Y_2+(d_2-d_3)F_2+(d_1-d_3)F_1\in(d_1+d_2-d_3;d_2-d_3,d_1-d_3,d_1+d_2-2d_3)$ has valuation $(d_3,0)$ and determines the vertex $(d_1+d_2-d_3,d_3,0)\in\inob{x}{L}$.	
	\item $d_3Q+(d_2-d_3)\ell_3\in (d_2+d_3;d_3,d_3,d_2)\subseteq(d_2+d_3;-d_1+d_2+d_3,d_3,d_2)$ has valuation $(0,d_2+d_3)$ and determines the vertex $(d_2+d_3,0,d_2+d_3)\in\inob{x}{L}$.
	\item $d_3Q+(d_1-d_2)F_1+(d_2-d_3)\ell_3\in(d_1+d_3;d_3,d_1-d_2+d_3,d_1)$ has valuation $(0,d_2+d_3)$ and determines the vertex $(d_1+d_3,0,d_2+d_3)\in\inob{x}{L}$.
	\item $d_3Q+(d_2-d_3)F_2+(d_1-d_3)F_1\in(d_1+d_2;d_2,d_1,d_1+d_2-d_3)$ has valuation $(0,2d_3)$ and determines the vertex $(d_1+d_2,0,2d_3)\in\inob{x}{L}$.
\end{enumerate}
These conclude the proof.
\end{proof} 

\begin{remark}
	Our strategy of proof for Theorem \ref{thm:smalldimensions} boils down to a guess-and-check. The coordinates of the vertices of the generic iNObody were found with a different argument in \cite{FL25fzproducts}. 
\end{remark}

\section{Balanced box-products in arbitrary dimension}

In this section we prove Theorem \ref{thm:main}. The strategy will be similar to that of Theorem \ref{thm:smalldimensions}, although the argument here is existential, not constructive.

\begin{lemma}\label{lemma:computing}
	For any $n\geq d\geq 1$ we have the following inclusion:
	\[
	d\cdot {\bf e}_d \ \in \ \nob{\widetilde Y_{\bullet}}{d\rho^*H-\sum\nolimits_{i=1}^n(d-1)E_i} \ ,
	\]
	where ${\bf e}_1,\ldots,{\bf e}_{n-1}$ are the standard basis of $\bb R^{n-1}$, while ${\bf e}_n=(0,\ldots ,0)\in \RR^{n-1}$ is the origin. 
\end{lemma}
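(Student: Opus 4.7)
The plan is to exhibit, for each $d\in\{1,\ldots,n\}$, an explicit section of the line bundle $L_d := d\rho^*H-(d-1)\sum E_i$ on $\widetilde E$ whose $\widetilde Y_\bullet$-valuation is exactly $d\cdot{\bf e}_d$. The argument identifies $V_d := H^0(\widetilde E, L_d)$ with a concrete polynomial space and then reduces the rest to a linear independence statement for products of linear forms, which will be the main point.

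First, sections of $L_d$ correspond to degree-$d$ hypersurfaces on $\bb P^{n-1}$ having multiplicity $\geq d-1$ at each coordinate point $p_i=e_i$. Writing such a polynomial as $\sum c_\alpha x^\alpha$ and dehomogenizing at $p_j$, its multiplicity at $p_j$ equals $d-\max\{\alpha_j : c_\alpha\neq 0\}$; the condition at every $j$ therefore forces $\alpha_j\leq 1$ for all $j$. Hence $V_d$ is spanned by the squarefree monomials $\prod_{i\in J}x_i$ for $J\subset[n]$ with $|J|=d$, and has dimension $\binom{n}{d}$.

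Second, realize $\widetilde Y_{d-1}$ as the strict transform of a general linear subspace $Z\simeq\bb P^{n-d}\subset\bb P^{n-1}$ of codimension $d-1$; this $Z$ avoids every $p_i$, and the restrictions $\ell_i := x_i|_Z$ are $n$ linear forms on $\bb P^{n-d}$ in general linear position. The restriction map $\Phi\colon V_d\to H^0(Z,\cal O(d))$ sending $\prod_{i\in J}x_i\mapsto\prod_{i\in J}\ell_i$ connects two $\binom{n}{d}$-dimensional spaces, so being an isomorphism is equivalent to linear independence of the $\binom{n}{d}$ products. To prove this, for each subset $K\subset[n]$ with $|K|=n-d$ let $q_K\in\bb P^{n-d}$ be the unique common zero of the $\ell_i$ with $i\in K$, which exists by general position. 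Evaluation of $\prod_{i\in J}\ell_i$ at $q_K$ vanishes whenever $J\cap K\neq\emptyset$, and the remaining nonzero entries correspond precisely to complementary pairs $K=[n]\setminus J$. The evaluation matrix is therefore diagonal with nonzero entries $\prod_{i\in J}\ell_i(q_{[n]\setminus J})$, hence invertible.

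Finally, choose a linear form $\ell$ on $Z$ cutting out $\widetilde Y_d$ inside $\widetilde Y_{d-1}$ and use surjectivity of $\Phi$ to pick $s\in V_d$ with $s|_{\widetilde Y_{d-1}}=\ell^d$ (when $d=n$, $\widetilde Y_{d-1}$ is a point and one simply uses $s=\prod_i x_i$, nonvanishing at a general point). Because $s|_{\widetilde Y_{d-1}}\neq 0$, the restrictions $s|_{\widetilde Y_i}$ are also nonzero for every $i<d$, giving $\nu_i(s)=0$ for $i<d$; then $\nu_d(s)=\ord_{\widetilde Y_d}(\ell^d)=d$, and the residual divisor $s|_{\widetilde Y_{d-1}}-d\widetilde Y_d$ vanishes, so $\nu_i(s)=0$ for $i>d$ as well. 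Thus $\nu_{\widetilde Y_\bullet}(s)=d\cdot{\bf e}_d$, proving the inclusion. The only real obstacle is the linear independence step in the middle, which the diagonal structure of the evaluation matrix dispatches cleanly.
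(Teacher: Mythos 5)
Your argument is correct and follows the same overall strategy as the paper: reduce to identifying $H^0(\widetilde E, L_d)$ with the span of square-free degree-$d$ monomials, prove the restriction to a general $\bb P^{n-d}$ is an isomorphism (hence surjective), and pick a section restricting to $\ell^d$ to get valuation $d\cdot\mathbf{e}_d$. You do prove a slightly stronger preliminary statement, namely that the square-free monomials span the \emph{entire} space $H^0(\widetilde E, L_d)$ rather than just a subspace (which is all the paper needs), though this is a harmless strengthening.

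The one genuine difference is in the key linear-independence step. The paper proves the products $(\ell_I)_{|I|=d}$ are linearly independent by induction on $n$: scale so $f_n$ is a specific form, apply the substitution $\varphi$ killing $f_n$, and reduce to $n-1$ forms in fewer variables, then repeat the argument over all indices. You instead use a clean evaluation argument: for each $K\subset[n]$ with $|K|=n-d$, the point $q_K$ is the common zero of $\{\ell_i:i\in K\}$, and $\prod_{i\in J}\ell_i$ vanishes at $q_K$ unless $J=[n]\setminus K$, so the $\binom{n}{d}\times\binom{n}{d}$ evaluation matrix is diagonal (up to the complement bijection) with nonzero diagonal. This is a direct combinatorial identity rather than an induction, and it buys a shorter, arguably more transparent proof; both hinge on the observation that the $n$ restricted coordinate forms are in general linear position on a general $\bb P^{n-d}$ (any $n-d+1$ are linearly independent), which you use to guarantee existence of the $q_K$ and non-vanishing of the complementary product. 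Both are correct; yours avoids the somewhat fiddly variable substitution and "repeat for all other indices" step at the end of the paper's induction.
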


\begin{proof}
	Consider homogeneous coordinates $x_1,\ldots,x_n$ on $\bb P^{n-1}$.
	Let $Q\deq x_1\cdots x_n$. It has degree $n$ and multiplicity $n-1$ at the $p_i$, hence determines an element of $|n\rho^*H-\sum_{i=1}^n(n-1)E|$. When $Y_{\bullet}: \bb P^{n-1}=Y_1\supset\ldots\supset Y_n$ is a general linear flag so that $Q(Y_n)\neq 0$, then $\nu_{\widetilde Y_{\bullet}}(\{Q=0\})=(0,\ldots,0)$. This settles the case $n=d$.
	We show the case $n>d$ by induction.
	Before that, consider the incomplete linear system
	\[
	\Lambda_d\deq \bb C\langle x_I\ \mid\ I\subset\{1,\ldots,n\},\ |I|=d\rangle 
	\]
	where $x_I\deq\prod_{i\in I}x_i$.
	It is easy to see that $\Lambda_d$ identifies naturally with a subspace of $|d\rho^*H-\sum_{i=1}^n(d-1)E_i|$, for example by applying the degree $n-d>0$ partial derivatives to $Q$.
	
	We claim that for a general linear flag term $Y_{d}\simeq \bb P^{n-d} \subset\bb P^{n-1}$, the morphism induced by restriction
	\[\psi_d:\Lambda_d\hookrightarrow H^0\bigl(\bb P^{n-1},\cal O_{\bb P^{n-1}}(d)\bigr)\to H^0\bigl(Y_{d},\cal O_{\bb P^{n-d}}(d)\bigr)\]
	is an isomorphism. This suffices as then $\psi_d$ is surjective, and so there exists $Q_d\in\Lambda_d$ that restricts to $Y_d$ as $f^d$, where $f=0$ is the linear equation of $Y_{d+1}$ in $Y_d$. Then $\nu_{\widetilde Y_{\bullet}}(Q_d)=d\cdot{\bf e}_d$ as needed.
	
	For the claim, it suffices to show that $\psi_d$ is injective when $Y_d$ is general, as both sides of $\psi_d$ have dimension ${n\choose d}$. Let $z_1,\ldots,z_{n-d+1}$ be homogeneous coordinates on $\bb P^{n-d}$. Let $Y_d\subset\bb P^{n-1}$ be parameterized by \[\bb P^{n-d}\ni [z_1:\ldots:z_{n-d+1}]=z\mapsto [f_1(z):\ldots:f_n(z)]\in\bb P^{n-1}\]
	where the $f_i$ are general linear forms on $\bb P^{n-d}$. For $I\subset\{1,\ldots,n\}$, denote by $f_I=\prod_{i\in I}f_i$. We want to show that the equation of homogenous forms in $\bb C[z_1,\ldots,z_{n-d+1}]$ with complex coefficients
	\[
	\sum_{|I|=d}c_If_I=0\ ,
	\]
	has only one solution, the trivial one, when $c_I=0$ for all $I$.
	Equivalently, $(f_I(z))_{|I|=d}$ are linearly independent. 
	This we prove by induction on $n\geq d+1$.
	
	Assume first $n=d+1$. Then $Y_d=Y_{n-1}\simeq\bb P^1$. Due to our generality condition, we may assume after scaling that $f_i=z_1-r_iz_2$ for distinct $r_i\in\bb C$. The linear independence of the $f_I$ is equivalent to the linear independence of $\prod_{j\neq i}(z-r_j)$ in $\bb C[z]$. This follows easily by evaluating at the $r_i$, using that they are distinct.
	
	Assume now $n>d+1$ and that the claim is valid for $n-1$. By generality, after scaling we may assume $f_n(z_1,\ldots,z_{n-d+1})=\sum_{i=1}^{n-d}\alpha_iz_i-z_{n-d+1}$. Consider the morphism 
	\[
	\varphi:\bb C[z_1,\ldots,z_{n-d+1}]\to\bb C[z_1,\ldots,z_{n-d}]
	\]
	 determined by $\varphi(z_i)=z_i$ for $i\in\{1,\ldots,n-d\}$ and $\varphi(z_{n-d+1})=\sum_{i=1}^{n-d}\alpha_iz_i$. It maps $f_n$ to $0$. Set $g_i\deq\varphi(f_i)$ for $i\in\{1,\ldots,n-1\}$. These are general linear forms in $z_1,\ldots,z_{n-d}$. Also denote $g_I=\prod_{i\in I}g_i$. By the induction hypothesis, the $(g_I)_{|I|=d,\ I\subset\{1,\ldots,n-1\}}$ are linearly independent. 
	Assume $\sum_{|I|=d}c_If_I=0$. Then $\sum_{|I|=d,\ n\not\in I}c_Ig_I=0$ and so $c_I=0$ for all $I$ that do not contain $n$. Now repeat the argument for all other indices in $\{1,\ldots,n\}$. 
\end{proof}

\begin{proof}[{\bf Proof of Theorem \ref{thm:main}}]
	Since $\inob{x}{L}$ is convex of Euclidean volume $1$, same as the simplex in the conclusion, it is sufficient to prove that all the vertices of the simplex belong to $\inob{x}{L}$. 
	The origin is the valuation of any effective divisor that does not pass through $x$. On the other extreme, $(n,0,\ldots,0)$ is the valuation of $H_1+\ldots+H_n$ with respect to a very general linear flag in $E$.
	Lemma \ref{lemma:computing} gives all other vertices by Proposition \ref{prop:equalslice}.
\end{proof}

\section{Questions and open problems}

\begin{problem}\label{prob:one}
Let $X=C_1\times\ldots\times C_n$ and $L=L_1\boxtimes\ldots\boxtimes L_n$ a boxed-product with $\deg L_i=d_i$. Let $x\in X$ be any point, then compute the convex set $\Delta_x(L)\subseteq \RR^n_{\geq 0}$, generalizing the case $n=2,3$ from Theorem \ref{thm:smalldimensions}.
\end{problem}
We expect $\Delta_x(L)$ to be a polytope, but as the case $n=3$ shows, guessing does not seem a trivial game. The techniques from \cite{FL25jac} of computing birational Fujita--Zariski decompositions may apply in higher dimension as well. The toric variety we are likely to have to work on is the blow-up of all the torus invariant linear strata in $\bb P^{n-1}$, resolving the Cremona transformation. This is an interesting space in itself. It can be interpreted as the toric variety associated to the root system $A_{n-1}$ and to the permutohedron, or the Losev--Manin moduli space $\overline L_n$. See \cites{LM00,BB11,Huh18}. The combinatorics appear involved.

\begin{problem}
Let $C$ be a smooth projective curve and $X=C^n$. Study the shape of the generic iNObody at any point $x\in X$ for natural non-boxed product type divisors.
\end{problem}
When each $C=\PP^1$, every divisor class on $X$ is of boxed product type. However, when $g(C)\geq 1$, the diagonal $\Delta\subseteq C\times C$ is already not of boxed product type. Here even understanding which classes $d(h_1+h_2)-t\Delta$ are effective is an open question in general. See \cite[Section 1.5.B]{laz04} for the history of the problem, \cites{cknagata,Ross} for its relation to the Nagata conjecture, and \cite{fm2} for more recent results. More generally, one can construct various natural morphisms $C^n\to{\rm Jac}(C)$, and then the pullback of the theta polarization is an interesting class that is usually not of boxed-product type.

\begin{problem}
Let $\Sigma$ be a complete $n$-dimensional fan such that it's associated toric variety  $X_{\Sigma}$ is smooth. For any point $x\in X$ compute the generic global iNObody $\Delta_x(X_{\Sigma})\subseteq N^1(X_{\Sigma})\times \RR^n$.
\end{problem}

The problem is already very difficult for general $x\in X$ on toric surfaces where Seshadri constants are not computed (see \cites{Eck08,ItoSeshadriToric,DRL22,LU23}). 
It is also interesting for $T$-invariant points, because of the genericity condition on the flag which means that the flag terms are not $T$-invariant.
Examples show that these convex sets constructed from general linear flags have simpler shapes than for $T$-invariant flags on the exceptional divisor of the blow-up of the point. For example, on the toric surface $X=\bb P^1\times\bb P^1$ consider the polarization $L=\cal O(1,1)$ and a point $x\in X$. Then $\inob{x}{L}$ is the triangle with vertices $(0,0)$, $(1,1)$, and $(2,0)$. But there are only two choices of $T$-invariant flags in the exceptional divisor on the blow-up of $x$. They correspond to the two rulings on a smooth quadric surface. Either choice leads to the parallelogram with vertices $(0,0)$, $(1,0)$, $(1,1)$, and $(2,1)$, a different body from the triangle above. Considering the toric ideas in our proofs, the previous question seems related to the following:

\begin{problem}Let $f:X_{\Sigma}\to\bb P^{n}$ be a composition of toric blow-ups. If $Y_{\bullet}$ is a (very) general complete linear flag on $\bb P^{n}$, then its strict transform flag $Z_{\bullet}$ in $X_{\Sigma}$ is well-defined. Compute the global NObody $\Delta_{Z_{\bullet}}(X_{\Sigma})$.
\end{problem}

In \cite{FL25gen} we explain how these NObodies with respect to a very general choice of a linear flag on $X_{\Sigma}$ satisfy properties inspired by the theory of generic initial ideals. They usually do not fully describe the body when our ambient space is a toric variety as above. The goal is to find algorithms that construct the convex body $\Delta_{Z_{\bullet}}(X_{\Sigma})$ from the convex data of the complete fan $\Sigma$. In the case of the blow-up of $\PP^{n-1}$ in $n$ points in general linear position we were able to construct the body directly using its rich geometry.

\bibliographystyle{alpha}
\bibliography{Positive}
\end{document}